\documentclass{amsart}
\usepackage{mathrsfs}
\usepackage{mathrsfs}
\usepackage{amsfonts}
\usepackage{cases}
\usepackage{latexsym}
\usepackage{amsmath}
\usepackage[all]{xy}
\usepackage{stmaryrd}
\usepackage{amsfonts}
\usepackage{amsmath,amssymb,amscd,bbm,amsthm,mathrsfs,dsfont}

\usepackage[numbers,sort&compress]{natbib}
\usepackage[pagebackref]{hyperref}
\usepackage{hypernat}
 
\usepackage{color, hyperref}
\definecolor{blue}{rgb}{1.0,0.0,0.9}
\hypersetup{colorlinks, breaklinks,
            linkcolor=blue,urlcolor=blue,
            anchorcolor=blue, citecolor=blue}
\usepackage{fancyhdr}
\usepackage{amsxtra,ifthen}
\usepackage{verbatim}

\usepackage[numbers,sort&compress]{natbib}
\usepackage[pagebackref]{hyperref}
\usepackage{hypernat}

\theoremstyle{plain}
\newtheorem{theorem}{Theorem}[section]

\newtheorem{lemma}[theorem]{Lemma}

\newtheorem{question}[theorem]{Question}

\theoremstyle{definition}

\begin{document}

\title[Nonlinear $\ast$-Jordan-Type Derivations on von Neumann Algebras]
{Nonlinear $\ast$-Jordan-Type Derivations on von Neumann Algebras}

\author{Wenhui Lin}

\address{Lin: College of Science, China Agricultural University, Beijing 100083, P. R. China}

\email{whlin@cau.edu.cn}\email{haotianwei@hotmail.com}

\begin{abstract}
Let $\mathcal{H}$ be a complex Hilbert space, $\mathcal{B(H)}$ be the algebra of all 
bounded linear operators on $\mathcal{H}$ and $\mathcal{A} \subseteq \mathcal{B(H)}$ 
be a von Neumann algebra without central summands of type $I_1$.  For arbitrary elements 
$A, B\in \mathcal{A}$, one can define their $\ast$-Jordan product in the sense of 
$A\diamond B = AB+BA^\ast$. Let  $p_n(x_1,x_2,\cdots,x_n)$ be the polynomial 
defined by $n$ indeterminates $x_1, \cdots, x_n$ 
and their $\ast$-Jordan products. In this article, it is shown that a mapping 
$\delta: \mathcal{A} \longrightarrow  \mathcal{B(H)}$ satisfies the condition 
$$
\delta(p_n(A_1, A_2,\cdots, A_n))=\sum_{k=1}^n
p_n(A_1,\cdots, A_{k-1}, \delta(A_k), A_{k+1},\cdots, A_n)
$$
for all $A_1, A_2,\cdots, A_n \in \mathcal{A}$ if and only if $\delta$ is an additive $\ast$-derivation. 
\end{abstract}

\subjclass[2010]{47B47, 46L10}

\keywords{Nonlinear $\ast$-Jordan-type derivation, von Neumann algebra}


\date{\today}

\maketitle

\tableofcontents

\section{Introduction}
\label{xxsec1}

Let $\mathcal{A}$ be an associative $\ast$-algebra over the complex field $\Bbb{C}$. 
For any $A, B\in \mathcal{A}$, one can denote a ``new product" of $A$ and $B$ by 
$A\diamond B=AB+BA^\ast$, and this new product $\diamond$ is usually said to be \textit{$\ast$-Jordan product}. 
Such kind of product based on Jordan bracket naturally 
appears in relation with the so-called Jordan $\ast$-derivations and plays an important 
role in the problem of representability of quadratic functionals by sesqui-linear 
functionals on left-modules over $\ast$-algebras (see \cite{Molnar2, MolnarSemrl, Semrl1}). 
The product is workable for us to characterize ideals, see  \cite{BresarFosner, Molnar1, Molnar3}. 
Especial attention has been paid to understanding mappings which preserve the product 
$AB+BA^\ast$ between $\ast$-algebras, see \cite{DaiLu, HuoZhengLiu, LiLu, LiLuFang1, ZhaoLi2}.

The question of to what extent the multiplicative structure of an algebra determines its 
additive structure has been considered by many researchers  over the past decades. 
In particular, they have investigated under which conditions bijective mappings 
between algebras preserving the multiplicative structure necessarily preserve the 
additive structure as well. The most fundamental result in this direction is due 
to W. S. Martindale III \cite{Martindale} who  proved that every bijective 
multiplicative mapping from a prime ring containing a nontrivial idempotent 
onto an arbitrary ring is necessarily additive. Later, a number of authors 
considered the Jordan-type product or Lie-type product and proved that, 
on certain associative algebras or rings, bijective mappings which preserve 
any of those products are automatically additive, see 
\cite{BaiDu, BresarFosner, CuiLi, DaiLu, HuoZhengLiu, LiLu, LiLuFang1, Molnar4, Molnar5, ZhaoLi2}.

An additive mapping $\delta: \mathcal{A} \longrightarrow \mathcal{A}$ is called an
\textit{additive derivation} if $\delta(AB)=\delta(A)B+A\delta (B)$ for all  $A,B \in \mathcal{A}$. 
Furthermore, $\delta$ is said to be an \textit{additive $\ast$-derivation} provided that $\delta$ 
is an additive derivation and satisfies $\delta(A^{\ast})=\delta(A)^\ast$ for all $A\in \mathcal{A}$.
Let $\delta: \mathcal{A}\longrightarrow \mathcal{A}$ be a mapping 
(without the additivity assumption). We say that $\delta$ is a 
\textit{nonlinear $\ast$-Jordan derivation} if
$$
\delta(A \diamond B)=\delta(A) \diamond B+A \diamond \delta(B),
$$
holds true for all $A,B\in \mathcal{A}$. Similarly, a mapping $\delta:
\mathcal{A}\longrightarrow \mathcal{A}$ is called a \textit{nonlinear $\ast$-Jordan triple derivation} if it satisfies the condition
$$
\delta(A \diamond B \diamond C)=\delta(A) \diamond B \diamond C+A \diamond \delta(B)\diamond C+A \diamond B \diamond \delta(C)
$$
for all $A, B, C\in \mathcal{A}$, where $A \diamond B \diamond C=(A \diamond B) \diamond C$ . 
We should be aware that $\diamond$ is not necessarily associative.  

Given the consideration of $\ast$-Jordan derivations and $\ast$-Jordan triple derivations, we can 
further develop them in one natural way. Suppose that $n\geq 2$ is a fixed positive
integer. Let us see a sequence of polynomials with $\ast$
$$
\begin{aligned}
p_1(x_1)&=x_1,\\
p_2(x_1,x_2)&=x_1 \diamond x_2=x_1x_2+x_2x_1^\ast,\\
p_3(x_1,x_2,x_3)&=p_2(x_1,x_2) \diamond x_3=(x_1 \diamond x_2) \diamond x_3,\\
p_4(x_1,x_2,x_3,x_4)&=p_3(x_1,x_2,x_3) \diamond x_4=((x_1 \diamond x_2) \diamond x_3) \diamond x_4,\\
\cdots &\cdots,\\
p_n(x_1,x_2,\cdots,x_n)&=p_{n-1}(x_1,x_2,\cdots,x_{n-1}) \diamond x_n\\
&=\underbrace{(\cdots ((}_{n-2}x_1 \diamond x_2)\diamond x_3)\diamond \cdots \diamond x_{n-1}) \diamond x_n.
\end{aligned}
$$
Accordingly, a \textit{nonlinear $\ast$-Jordan $n$-derivation} is a mapping $\delta:
\mathcal{A} \longrightarrow \mathcal{A}$ satisfying the condition
$$
\delta(p_n(A_1, A_2,\cdots, A_n))=\sum_{k=1}^n
p_n(A_1,\cdots, A_{k-1}, \delta(A_k), A_{k+1},\cdots, A_n)
$$
for all $A_1,A_2,\cdots, A_n\in \mathcal{A}$. This notion makes the best use of the definition of Lie-type derivations 
and that of $\ast$-Lie-type derivations,  see \cite{FosnerWeiXiao, Lin1, Lin2}. 
By the definition, it is clear that every $\ast$-Jordan derivation is a 
$\ast$-Jordan 2-derivation and each $\ast$-Jordan triple derivation is a $\ast$-Jordan 3-derivation. 
One can easily check that each nonlinear $\ast$-Jordan derivation on $\mathcal{A}$ is 
a nonlinear $\ast$-Jordan triple derivation. But, we don't know whether the converse statement is true.
$\ast$-Jordan 2-derivations, $\ast$-Jordan 3-derivations and $\ast$-Jordan $n$-derivations 
are collectively referred to as \textit{$\ast$-Jordan-type derivations}. 
$\ast$-Jordan-type derivations on operator algebras have been studied by 
several authors. Let $\mathcal{H}$ be a complex Hilbert space and $\mathcal{B(H)}$ be the algebra of all 
bounded linear operators on $\mathcal{H}$.  Li et al in \cite{LiLuFang2} showed that if $\mathcal{A}\subseteq \mathcal{B(H)}$ is a von Neumann 
algebra without central summands of type $I_1$, then $\delta: \mathcal{A}\longrightarrow \mathcal{B(H)}$ is a 
nonlinear $\ast$-Jordan derivation if and only if $\delta$ is an additive $\ast$-derivation. 
More recently, this result is extended to the case of nonlinear $\ast$-Jordan triple derivations by Zhao and Li \cite{ZhaoLi1}. 
Taghavi et al \cite{TaghaviRohiDarvish} and Zhang \cite{Zhang} independently investigate 
$\ast$-Jordan derivations on factor von Neumann algebras, respectively. It turns out that 
each nonlinear $\ast$-Jordan derivation on a factor von Neumann algebra is an additive $\ast$-derivation.

Inspired by the afore-mentioned works, we will concentrate on giving a description of nonlinear 
$\ast$-Jordan-type derivations on von Neumann algebras. The organization of this paper is as follows. 
We recall and collect some indispensable facts with respect to von Neumann algebras in the 
second section \ref{xxsec2}. The third Section \ref{xxsec3} is devoted to our main result 
Theorem \ref{xxsec3.1} and its proof. The main theorem states that every nonlinear 
$\ast$-Jordan-type derivation on a von Neumann algebra without central summands of type $I_1$
is an additive $\ast$-derivation. Similar statements are also given for factor von Neumann algebras 
and standard operator algebras without proofs. Some potential topics for the future research are presented in the last Section \ref{xxsec4}.

\section{Preliminaries}
\label{xxsec2}

Throughout this paper, $\mathcal{H}$ denotes a complex Hilbert space and $\mathcal{B}(\mathcal{H})$ 
is the algebra of all bounded linear operators on $\mathcal{H}$. A \textit{von Neumann algebra} 
$\mathcal{A}$ is weakly closed, self-adjoint algebra of operators on $\mathcal{H}$ containing 
the identity operator $I$. The set $\mathcal{Z}(\mathcal{A}) =\{ S \in \mathcal{A}: ST=TS \ \ \text{for all}\ \ T\in \mathcal{A}\}$ 
is called the \textit{centre} of $\mathcal{A}$.  
A projection $P$ is called a \textit{central abelian projection} if $P \in \mathcal{Z}(\mathcal{A})$ 
and $P\mathcal{A}P$ is abelian. Recall that the \textit{central carrier} of $A$, denoted by $\overline{A}$, is 
the smallest central projection $P$ satisfying the condition $PA=A$. It is straightforward to 
check that the central carrier of $A$ is the projection onto the closed subspace spanned by 
$\{ BA(x):B \in \mathcal{A}, x\in \mathcal{H} \}$. If $A$ is self-adjoint, then the \textit{core} 
of $A$, denoted by $\underline{A}$, is $\text{sup}\{ S\in \mathcal{Z}(\mathcal{A}): S=S^{\ast}, S \leq A\}$. If $P$
is a projection, it is clear that $\underline{P}$ is the largest central projection $Q$ with $Q\leq P$. 
A projection $P$ is said to be \textit{core-free} if $\underline{P}=0$. It is not difficult to see 
that $\underline{P}=0$ if and only if $\overline{I-P}=I$.

To round off the proof of our main theorem, we need to give some necessary lemmas.

 \begin{lemma}\label{xxsec2.1}{\rm \cite[Lemma 4]{Miers}}
Let $\mathcal{A}$ be a von Neumann without central summands of type $I_1$.  
Then each nonzero central projection in $\mathcal{A}$ is the central carrier of 
a core-free projection in $\mathcal{A}$.
\end{lemma}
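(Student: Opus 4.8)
The plan is to construct the required projection over a maximal orthogonal decomposition of the given central projection into central pieces, using on each piece the fact that the absence of an abelian (type $I_1$) summand forces a local supply of core-free projections. Fix a nonzero central projection $C$. Recall that $\overline{E}$ denotes the central carrier, that $\underline{E}$ denotes the core, and that $E$ is core-free exactly when $\underline{E}=0$. I will use two elementary facts throughout. First, for a central projection $Z$ the corner $Z\mathcal{A}$ is again a von Neumann algebra with centre $Z\mathcal{Z}(\mathcal{A})$, so for a subprojection $E\leq Z$ the notions of centrality, central carrier and core computed in $\mathcal{A}$ and in $Z\mathcal{A}$ agree. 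Second, a von Neumann algebra is abelian if and only if every one of its projections is central, since such an algebra is generated by its projections.

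The local step, which I expect to be the conceptual heart, is this: every nonzero central projection $Z\leq C$ dominates a nonzero core-free projection whose central carrier is also dominated by $Z$. The hypothesis that $\mathcal{A}$ has no central summand of type $I_1$ means precisely that $Z\mathcal{A}$ is non-abelian for each nonzero central $Z$, so by the second fact $Z\mathcal{A}$ contains a non-central projection $E\leq Z$. I then set $E_0=E-\underline{E}$. Since $\underline{E}$ is central while $E$ is not, $E_0\neq 0$; and $E_0$ is core-free, because any central projection $R\leq E_0$ would be orthogonal to $\underline{E}$, making $R+\underline{E}$ a central projection below $E$ strictly larger than $\underline{E}$, contradicting the maximality defining the core. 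Writing $W=\overline{E_0}$, monotonicity of the central carrier gives $0\neq W\leq\overline{E}\leq Z$, and $W E_0=E_0$ shows $E_0\leq W$; thus $E_0$ is a core-free projection with $\overline{E_0}=W\leq Z$, as wanted.

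To globalise, I would apply Zorn's lemma to the collection of families $\{(Z_i,Q_i)\}$ in which the $Z_i$ are pairwise orthogonal nonzero central projections below $C$ and each $Q_i\leq Z_i$ is core-free with $\overline{Q_i}=Z_i$, ordered by inclusion. A maximal such family yields $Z=\sup_i Z_i$ and the orthogonal supremum $Q=\sup_i Q_i\leq Z$. One checks that $\overline{Q}=\sup_i\overline{Q_i}=Z$, and that $Q$ is core-free: a central projection $R\leq Q$ decomposes as $R=\sum_i RZ_i$ with each $RZ_i$ a central projection dominated by $Q_i$, hence $RZ_i\leq\underline{Q_i}=0$, forcing $R=0$. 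If $Z\neq C$, then $C-Z$ is a nonzero central projection and the local step furnishes a pair $(W,E_0)$ with $W\leq C-Z$ orthogonal to every $Z_i$, which could be adjoined to the family and contradicts maximality. Therefore $Z=C$, and $Q$ is the desired core-free projection with $\overline{Q}=C$.

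The main obstacle I anticipate is not any single deep fact but the bookkeeping in this final assembly, namely verifying that both core-freeness and the exact value of the central carrier survive passage to the orthogonal supremum, together with the clean identification of the hypothesis ``no central summand of type $I_1$'' with the non-abelianness of every central corner $Z\mathcal{A}$, which is exactly what powers the local step.
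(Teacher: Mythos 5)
Your proof is correct, but it cannot be compared step-by-step with the paper's, because the paper contains no proof of this lemma at all: the statement is imported wholesale from Miers \cite[Lemma 4]{Miers}. Your argument is therefore a genuinely independent, self-contained route, and both of its pillars check out. The local step is sound: the absence of central summands of type $I_1$ is indeed equivalent to non-abelianness of $Z\mathcal{A}$ for every nonzero central projection $Z$; a non-abelian von Neumann algebra does contain a non-central projection $E$ (being generated by its projections); and $E_0=E-\underline{E}$ is nonzero and core-free, with $0\neq\overline{E_0}\leq\overline{E}\leq Z$, since any central $R\leq E_0$ would make $R+\underline{E}$ a central projection below $E$ exceeding the core. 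The globalization also survives scrutiny, which is exactly the bookkeeping you flagged as the main obstacle: for a maximal family $\{(Z_i,Q_i)\}$, if $P$ is central with $PQ=Q$ then $PQ_i=PQQ_i=QQ_i=Q_i$, so $P\geq\sup_i\overline{Q_i}$, giving $\overline{Q}=\sup_i Z_i=Z$; and a central $R\leq Q$ satisfies $RZ_i\leq QZ_i=Q_i$ (the cross terms $Q_jZ_i$ vanish by orthogonality), hence $RZ_i\leq\underline{Q_i}=0$ for all $i$ and $R=RZ=\sum_i RZ_i=0$, so core-freeness and the exact central carrier both pass to the orthogonal supremum; finally, maximality forces $Z=C$ via the local step applied to $C-Z$. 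What your route buys is an elementary argument using only the definitions of core and central carrier, the spectral-theoretic fact that von Neumann algebras are generated by their projections, and Zorn's lemma --- no comparison theory, no type decomposition, no halving --- so it would make the paper's appeal to \cite{Miers} dispensable; what the citation buys the paper is, of course, brevity.
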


\begin{lemma} \label{xxsec2.2}{\rm \cite[Lemma 2.2]{LiLuFang2}}
Let $\mathcal{A}$ be a von Neumann algebra on a Hilbert space $\mathcal{H}$. 
Let  $A \in \mathcal{B}(\mathcal{H})$ and $P \in \mathcal{A}$ is a projection with 
$\overline{P}=I$. If $ABP=0$ for all $B\in \mathcal{A}$, then $A=0$.

\end{lemma}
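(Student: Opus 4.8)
The plan is to reduce the statement to the explicit description of the central carrier recorded in Section~\ref{xxsec2}, namely that $\overline{P}$ is the orthogonal projection onto the closed subspace
$$
\mathcal{M} = \overline{\mathrm{span}}\,\{BP(x) : B \in \mathcal{A},\ x \in \mathcal{H}\}.
$$
The hypothesis $\overline{P} = I$ then says precisely that $\mathcal{M} = \mathcal{H}$, i.e. the vectors $BP(x)$, as $B$ ranges over $\mathcal{A}$ and $x$ over $\mathcal{H}$, are total in $\mathcal{H}$. Everything reduces to showing that $A$ vanishes on this total set.

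First I would rewrite the assumption $ABP = 0$ at the level of vectors: for all $B \in \mathcal{A}$ and all $x \in \mathcal{H}$,
$$
A\big(BP(x)\big) = (ABP)(x) = 0.
$$
Thus $A$ annihilates every vector in the spanning set of $\mathcal{M}$. Since $A$ is linear it annihilates the whole algebraic linear span of $\{BP(x)\}$, and since $A \in \mathcal{B}(\mathcal{H})$ is bounded, hence continuous, it annihilates the closure of that span, which is exactly $\mathcal{M} = \mathcal{H}$. Consequently $A(y) = 0$ for every $y \in \mathcal{H}$, so $A = 0$.

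There is no substantive obstacle here: the only point requiring care is the passage from the spanning set to its closed linear span, where the continuity of $A$ is indispensable — without boundedness one could only conclude that $A$ vanishes on the algebraic span rather than on all of $\mathcal{H}$. The remainder is a direct reading of the characterization of $\overline{P}$ supplied in the preliminaries, so the lemma follows immediately once the hypothesis is reinterpreted as totality of the vectors $BP(x)$.
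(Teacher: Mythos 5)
Your proof is correct, and it matches the intended argument: the paper itself states this lemma without proof, quoting it from \cite[Lemma 2.2]{LiLuFang2}, and the standard proof there is precisely your reduction --- read $\overline{P}=I$, via the characterization of the central carrier recorded in Section~\ref{xxsec2}, as saying that the vectors $BP(x)$ are total in $\mathcal{H}$, then use linearity and boundedness of $A$ to pass from $A(BPx)=(ABP)x=0$ to $A=0$. The one delicate point, invoking continuity of $A$ to go from the algebraic span to its closure, is handled explicitly in your write-up, so nothing is missing.
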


\begin{lemma}\label{xxsec2.3}
Let $\mathcal{A}$ be a von Neumann algebra without central summands of type $I_1$. 
For any $A\in \mathcal{A}$ and for any positive integer $n\geq 2$, we have
$$
p_n\left (A, \frac{1}{2}I, \cdots, \frac{1}{2}I \right)= \frac{1}{2}\left(A+A^{\ast} \right) 
$$
and 
$$
p_n\left (\frac{1}{2}I, \cdots, \frac{1}{2}I,A \right)= A. 
$$
 \end{lemma}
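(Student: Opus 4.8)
The plan is to reduce both identities to two elementary facts about the element $E := \tfrac{1}{2}I$, namely that $E^\ast = E$ and that $E$ behaves like a unit for the $\ast$-Jordan product $\diamond$ on the appropriate side. Observe first that for any self-adjoint $S \in \mathcal{A}$ one has
$$
S \diamond E = SE + E S^\ast = \tfrac{1}{2}S + \tfrac{1}{2}S = S,
$$
and that for any $A \in \mathcal{A}$ one has
$$
E \diamond A = EA + A E^\ast = \tfrac{1}{2}A + \tfrac{1}{2}A = A.
$$
These two computations carry the entire content of the lemma; everything else is a mechanical unwinding of the recursive definition $p_n = p_{n-1} \diamond x_n$.

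For the first identity I would argue by induction on $n$. When $n = 2$, a direct computation gives
$$
p_2\left(A, \tfrac{1}{2}I\right) = A \diamond E = AE + E A^\ast = \tfrac{1}{2}\left(A + A^\ast\right),
$$
which is self-adjoint. For the inductive step, suppose $p_{n-1}(A, E, \ldots, E) = \tfrac{1}{2}(A + A^\ast)$. Writing $S := \tfrac{1}{2}(A + A^\ast)$, so that $S^\ast = S$, the recursion together with the first displayed computation yields
$$
p_n(A, E, \ldots, E) = p_{n-1}(A, E, \ldots, E) \diamond E = S \diamond E = S,
$$
which completes the induction.

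For the second identity I would first record that
$$
p_m(E, E, \ldots, E) = E \qquad \text{for every } m \geq 1,
$$
which follows by a trivial induction from $p_1(E) = E$ and the base relation $E \diamond E = EE + E E^\ast = \tfrac{1}{4}I + \tfrac{1}{4}I = E$. Applying this with $m = n-1$ and then invoking the second displayed computation gives
$$
p_n\left(\tfrac{1}{2}I, \ldots, \tfrac{1}{2}I, A\right) = p_{n-1}(E, \ldots, E) \diamond A = E \diamond A = A,
$$
as required.

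Since the argument is purely algebraic and uses nothing beyond $E^\ast = E$ and the definition of $\diamond$, I do not anticipate any genuine obstacle; in particular the hypotheses on $\mathcal{A}$ (absence of central summands of type $I_1$) play no role in this lemma and are presumably retained only to keep the standing assumptions uniform throughout the paper. The single point demanding minor care is to notice that self-adjointness is \emph{created} at the first step of the first recursion and then \emph{preserved} at every subsequent application of $\diamond E$, so that the fixed-point relation $S \diamond E = S$ applies at each stage.
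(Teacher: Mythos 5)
Your proof is correct and takes essentially the same approach as the paper's: both rest on the elementary computations $A \diamond \tfrac{1}{2}I = \tfrac{1}{2}(A+A^{\ast})$, $S \diamond \tfrac{1}{2}I = S$ for self-adjoint $S$, and $\tfrac{1}{2}I \diamond A = A$, and then unwind the left-nested definition of $p_n$ (the paper collapses the innermost product first, you induct via the outer recursion $p_n = p_{n-1} \diamond x_n$ --- a purely cosmetic difference). Your side remark that the hypothesis on central summands of type $I_1$ is never used in this lemma is also accurate.
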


\begin{proof}
By a recursive calculation, we know that   
$$
\begin{aligned}
p_n\left(A, \frac{1}{2}I, \cdots, \frac{1}{2}I \right )&=p_{n-1}\left(\frac{1}{2}(A+A^{\ast}), \frac{1}{2}I, \cdots, \frac{1}{2}I\right )\\
&=p_{n-2}\left(\frac{1}{2} \left( A+A^{\ast} \right), \frac{1}{2}I, \cdots, \frac{1}{2}I \right )\\
&=p_{n-3}\left(\frac{1}{2} \left( A+A^{\ast} \right), \frac{1}{2}I, \cdots, \frac{1}{2}I \right )\\
&=\cdots\\
&=\frac{1}{2}\left ( A+A^{\ast} \right ).  
\end{aligned}
$$
Similarly, we also have
$$
\begin{aligned}
p_n\left (\frac{1}{2}I, \cdots, \frac{1}{2}I,A \right)&=p_{n-1}\left (\frac{1}{2}I, \cdots, \frac{1}{2}I,A \right)\\
&=p_{n-2}\left (\frac{1}{2}I, \cdots, \frac{1}{2}I,A \right)\\
&=\cdots\\
 &=A. 
\end{aligned}
$$
\end{proof}

Let $\mathcal{A}$ be a von Neumann algebra without central summands of type $I_1$. 
By Lemma \ref{xxsec2.1}, we know that there exists a nonzero central projection 
$P$ such that $\underline{P}=0$ and $\overline{P}=I$. For the convenience of discussion, 
let us set $P_1=P$, $P_2=I-P$. We write $\mathcal{A}_{ij}=P_i\mathcal{A}P_j$. 
Thus one gets $\mathcal{A}=\sum_{i,j=1}^2 \mathcal{A}_{ij}$. We denote the imaginary 
unit by ${\rm i}$.

\begin{lemma}\label{xxsec2.4} 
Let $\mathcal{A}$ be a von Neumann algebra without central summands of type $I_1$. 
For any  $A\in \mathcal{A}$ with $A=\sum_{i,j =1}^2 A_{ij}$ and $A_{ij}\in \mathcal{A}_{ij}$, we have
\begin{enumerate}
\item[(a)] $P_1 \diamond A=0$ implies that $A_{12}=A_{21}=A_{11}=0$. 
\item[(b)] $P_2 \diamond A=0$ implies that $A_{12}=A_{21}=A_{22}=0$. 
\item[(c)] $(P_2-P_1)\diamond A=0$ implies that $A_{11}=A_{22}=0$.
\end{enumerate}
\end{lemma}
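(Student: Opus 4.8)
The plan is to reduce all three statements to a single direct Peirce-component computation, exploiting that each of $P_1$, $P_2$ and $P_2-P_1$ is self-adjoint. First I would record that for any self-adjoint $X$ the $\ast$-Jordan product collapses to $X\diamond A = XA + AX^{\ast} = XA + AX$, since $X^{\ast}=X$. Applying this with $X=P_1$, with $X=P_2$, and with $X=P_2-P_1$ turns each hypothesis into an ordinary (Jordan) relation that is linear in $A$.

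Next I would expand $A=\sum_{i,j=1}^{2}A_{ij}$ with $A_{ij}\in\mathcal{A}_{ij}$ and use the elementary Peirce relations: $P_iA_{jk}$ equals $A_{jk}$ when $i=j$ and is $0$ otherwise, while $A_{jk}P_i$ equals $A_{jk}$ when $i=k$ and is $0$ otherwise. These follow immediately from $A_{jk}\in P_j\mathcal{A}P_k$ together with $P_1P_2=P_2P_1=0$ and $P_i^2=P_i$. A short computation then gives
$$
P_1\diamond A = 2A_{11}+A_{12}+A_{21},\qquad P_2\diamond A = A_{12}+A_{21}+2A_{22},
$$
and, subtracting these, $(P_2-P_1)\diamond A = 2A_{22}-2A_{11}$.

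Finally I would invoke the directness of the decomposition $\mathcal{A}=\bigoplus_{i,j}\mathcal{A}_{ij}$: a sum of elements lying in distinct Peirce spaces vanishes only if each summand vanishes, the $(i,j)$-component being extracted by left multiplication by $P_i$ and right multiplication by $P_j$. Reading off components from the three displayed identities yields (a), (b), and (c) respectively. For (c) the off-diagonal terms $A_{12}$ and $A_{21}$ already cancel before the hypothesis is used, which is precisely why the conclusion asserts only $A_{11}=A_{22}=0$ and says nothing about $A_{12},A_{21}$.

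There is no genuine obstacle here; the whole lemma is bookkeeping. The only points requiring care are tracking which Peirce space each product lands in and the repeated use of $P_i^{\ast}=P_i$. That self-adjointness is what kills the $A^{\ast}$ in the definition of $\diamond$ and keeps the computation linear in the $A_{ij}$, so that the direct-sum argument applies cleanly.
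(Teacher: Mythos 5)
Your proposal is correct and is essentially the paper's own argument: the paper likewise uses self-adjointness of the projections to write $P_1\diamond A=P_1A+AP_1=A_{12}+A_{21}+2A_{11}$ and concludes by directness of the Peirce decomposition, leaving (b) and (c) as ``analogous.'' Your explicit computation of all three cases, including the observation that the off-diagonal terms cancel in $(P_2-P_1)\diamond A=2A_{22}-2A_{11}$, just fills in the details the paper omits.
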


\begin{proof}
Let us first prove the assertion (a).
We have
$$
\begin{aligned}
0&=P_1 \diamond A=P_1A+AP_1^{\ast}=P_1A+AP_1\\
&=A_{12}+A_{21}+2A_{11},
\end{aligned}
$$
which leads to $A_{12}=A_{21}=A_{11}=0$.

The other two assertions can be achieved by an analogous manner. 
\end{proof}

\section{Main Theorem and Its Proof}
\label{xxsec3}

We are in a position to give the main theorem of this article which can be stated 
as follows. 

 \begin{theorem} \label{xxsec3.1}
Let $\mathcal{A}$ be a von Neumann algebra without central summands of 
type $I_1$. Then a mapping $\delta: \mathcal{A} \longrightarrow  \mathcal{B(H)}$ satisfies the rule
$$
\delta(p_n(A_1, A_2,\cdots, A_n))=\sum_{k=1}^n
p_n(A_1,\cdots, A_{k-1}, \delta(A_k), A_{k+1},\cdots, A_n)
$$
for all $A_1, A_2,\cdots, A_n \in \mathcal{A}$ if and only if $\delta$ is an additive $\ast$-derivation.
 \end{theorem}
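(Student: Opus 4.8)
The forward implication is a routine induction on the length of $p_n$: if $\delta$ is an additive $\ast$-derivation, one expands each product $X\diamond Y=XY+YX^{\ast}$ and applies the Leibniz rule together with $\delta(X^{\ast})=\delta(X)^{\ast}$. The substance is the converse. My plan is to use Lemma \ref{xxsec2.3} as a collapsing device: substituting $\tfrac{1}{2}I$ into all but one or two slots turns $p_n$ into $\tfrac{1}{2}(X+X^{\ast})$, into $X$, or into a single $\ast$-Jordan product $A\diamond B$, so that the $n$-variable hypothesis is reduced to the $2$-variable identity treated in \cite{LiLuFang2}.

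First I would record $\delta(0)=0$, and, taking all arguments equal to $\tfrac{1}{2}I$ together with $p_n(\tfrac{1}{2}I,\cdots,\tfrac{1}{2}I)=\tfrac{1}{2}I$, deduce that $D:=\delta(\tfrac{1}{2}I)$ is skew-Hermitian, $D+D^{\ast}=0$. Feeding $\tfrac{1}{2}I$ into the first $n-2$ slots and arbitrary $A,B$ into the last two, and using $p_n(\tfrac{1}{2}I,\cdots,\tfrac{1}{2}I,A,B)=A\diamond B$, every perturbed filler term symmetrizes to $0$ except the one coming from the slot adjacent to $A$, which yields the near-derivation identity
$$
\delta(A\diamond B)=\delta(A)\diamond B+A\diamond\delta(B)+(D\diamond A)\diamond B
$$
for all $A,B\in\mathcal{A}$. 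Because $D$ is skew, $D\diamond A=DA-AD$, so this collapses to the exact nonlinear $\ast$-Jordan derivation identity as soon as $D$ commutes with $\mathcal{A}$.

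The crux is therefore to show that $D=\delta(\tfrac{1}{2}I)$ commutes with every element of $\mathcal{A}$, so that the correction term $(D\diamond A)\diamond B$ vanishes identically; this is exactly the extra book-keeping that is absent when $n=2$. Here I would exploit the projection structure: with $P_1=P$ core-free, $\overline{P}=I$, $P_2=I-P$ and $\mathcal{A}_{ij}=P_i\mathcal{A}P_j$ (Lemma \ref{xxsec2.1}), I would specialize the near-derivation identity to arguments drawn from $P_1,P_2,P_2-P_1$ and from the off-diagonal blocks, locate the Peirce components of $D$ via Lemma \ref{xxsec2.4}, and then annihilate the commutators $[D,A]$ by the faithfulness Lemma \ref{xxsec2.2}, passing from $[D,A]BP_1=0$ for all $B\in\mathcal{A}$ to $[D,A]=0$ since $\overline{P_1}=I$. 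I expect this simultaneous control of $\delta(\tfrac{1}{2}I)$ and disposal of the extra filler contribution to be the principal obstacle.

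With $D$ central the clean identity $\delta(A\diamond B)=\delta(A)\diamond B+A\diamond\delta(B)$ holds for all $A,B\in\mathcal{A}$, so $\delta$ is a nonlinear $\ast$-Jordan derivation and the theorem of Li, Lu and Fang \cite{LiLuFang2} applies directly to give that $\delta$ is an additive $\ast$-derivation; in particular $\delta(I)=0$ then forces $D=0$ a posteriori. Should the centrality of $D$ prove inaccessible in isolation, the fallback is to run the full Martindale-type Peirce bootstrap \cite{Martindale}, establishing additivity block by block and then the $\ast$- and Leibniz properties directly, again with the lemmas above supplying the component extraction and the faithfulness needed to close each case.
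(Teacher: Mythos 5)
Your proposal is correct in substance, but it takes a genuinely different route from the paper. The paper never reduces to the two-variable case: after proving in its Claim 2 that $\delta\left(\tfrac{1}{2}I\right)\diamond A=0$ for every $A$, it runs the entire Martindale-type Peirce bootstrap itself (Claims 3--9 establish additivity block by block, Claim 10 the $\ast$-property, Claim 11 the Leibniz rule via the $\mathrm{i}A$, $\mathrm{i}B$ polarization trick), in effect reproving the Li--Lu--Fang theorem inside the $n$-variable setting, so that the argument is self-contained. Your primary route instead derives the near-derivation identity $\delta(A\diamond B)=\delta(A)\diamond B+A\diamond\delta(B)+(D\diamond A)\diamond B$, with $D=\delta\left(\tfrac{1}{2}I\right)$, kills the correction term, and then invokes \cite{LiLuFang2} as a black box. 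That is shorter, cleanly isolates what is genuinely new in the $n$-variable problem (the filler terms), and is legitimate since the setting of \cite{LiLuFang2} (domain $\mathcal{A}$, codomain $\mathcal{B(H)}$, no central summands of type $I_1$) matches Theorem \ref{xxsec3.1} exactly; the price is the loss of self-containedness. Your fallback option is, essentially verbatim, the paper's actual proof.

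The one point you miscalibrate: the step you call the ``principal obstacle'' --- showing that $D$ commutes with $\mathcal{A}$ --- requires no Peirce analysis of $D$, no Lemma \ref{xxsec2.4}, and no appeal to Lemma \ref{xxsec2.2}. It is immediate from the identity you already derived: put $A=\tfrac{1}{2}I$ in it. Since $D\diamond\tfrac{1}{2}I=\tfrac{1}{2}(D+D^{\ast})=0$ by skewness, $\tfrac{1}{2}I\diamond B=B$, and $\tfrac{1}{2}I\diamond\delta(B)=\delta(B)$, the identity collapses to $\delta(B)=D\diamond B+\delta(B)$, i.e.\ $D\diamond B=0$ for all $B\in\mathcal{A}$, which together with $D^{\ast}=-D$ is exactly $[D,B]=0$. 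This is precisely the paper's Claim 2, obtained there by expanding $\delta(A)=\delta\left(p_n\left(\tfrac{1}{2}I,\cdots,\tfrac{1}{2}I,A\right)\right)$ with $A$ in the last slot, so that every term with $D$ in slots $1,\dots,n-2$ symmetrizes to $\tfrac{1}{2}(D+D^{\ast})=0$ before reaching $A$, leaving $D\diamond A=0$. With that one-line fix your reduction to \cite{LiLuFang2} closes the converse (the case $n=2$ being the cited theorem itself), and the forward direction is indeed a routine induction.
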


\begin{proof}
The proof of this theorem can be realized via a series of claims.

\vspace{2mm}

\noindent {\bf Claim 1.} $\delta(0)=0$.
$$
\delta(0)=\delta(p_n(0,0,\cdots,0))=\sum_{k=1}^{n}\overset{\ \ \ \  k}{p_n(0,\cdots,\delta(0),\cdots,0)}=0.
$$

\noindent {\bf Claim 2.}  For any $A \in \mathcal{A}$, we have $\delta \left( \frac{1}{2}I \right) \diamond A=0$.
\vspace{2mm}

Note that  the fact $\frac{1}{2}I=p_n\left(\frac{1}{2}I,\cdots,\frac{1}{2}I \right)$.  By Lemma \ref{xxsec2.3} it follows that 
$$
\begin{aligned}
\delta \left( \frac{1}{2}I \right)=&\delta\left (p_n\left(\frac{1}{2}I,\cdots,\frac{1}{2}I \right) \right)\\
=&\sum_{k=1}^{n}\overset{\ \ \ \  k}{p_n\left(\frac{1}{2}I,\cdots,\frac{1}{2}I,\delta \left( \frac{1}{2}I \right),\frac{1}{2}I,\cdots,\frac{1}{2}I \right)}\\
=&\frac{n-1}{2}\left ( \delta \left( \frac{1}{2}I \right) +\delta \left( \frac{1}{2}I \right)^{\ast}\right)+\delta \left( \frac{1}{2}I \right).
\end{aligned}
$$
This gives
$$
\delta \left( \frac{1}{2}I \right) +\delta \left( \frac{1}{2}I \right)^{\ast}=0.  
$$
That is, 
$$
\delta \left( \frac{1}{2}I \right) \diamond \frac{1}{2}I=0. \eqno(3.1)
$$
Using the relation (3.1), we get
$$ 
\begin{aligned}
\delta(A)=&\delta\left (p_n\left(\frac{1}{2}I,\cdots,\frac{1}{2}I,A \right)\right)\\
=&\sum_{k=1}^{n-1}\overset{\ \ k}{p_n\left(\frac{1}{2}I,\cdots,\frac{1}{2}I,\delta \left( \frac{1}{2}I \right),\frac{1}{2}I,\cdots,\frac{1}{2}I,A \right)}+p_n\left(\frac{1}{2}I,\cdots,\frac{1}{2}I,\delta(A) \right)\\
=&\delta \left( \frac{1}{2}I \right)A +A\delta \left( \frac{1}{2}I \right)^{\ast}+\delta(A).
\end{aligned}
$$
for all $A \in \mathcal{A}$. Thus we obtain
$$ 
\delta \left( \frac{1}{2}I \right)A +A\delta \left( \frac{1}{2}I \right)^{\ast}=0.
$$
That is, 
$$
\delta \left( \frac{1}{2}I \right) \diamond A=0.
$$

\noindent{\bf Claim 3.} For any $A_{ll}\in \mathcal{A}_{ll}, B_{ij}\in \mathcal{A}_{ij}\, (i,j, l=1,2, i \neq j)$, we have
$$
\delta(A_{ll}+B_{ij})=\delta(A_{ll})+\delta(B_{ij}).
$$
We only need to prove the case of $i=l=1,j=2$, and the proofs of the other cases are rather similar 
and are omitted here. Let us write
$$
M=\delta(A_{11}+B_{12})-\delta(A_{11})-\delta(B_{12}).
$$
It is sufficient for us to show that $M=0$. Since
$$
p_n\left ( \frac{1}{2}I,\cdots, \frac{1}{2}I,P_2,A_{11} \right)=0
$$
and
$$
p_n\left (\frac{1}{2}I,\cdots, \frac{1}{2}I, P_2,A_{11}+ B_{12} \right)= p_n\left (\frac{1}{2}I,\cdots, \frac{1}{2}I,  P_2,B_{12} \right),
$$
we by {\bf Claim 2} have
$$
\begin{aligned}
&p_n\left ( \frac{1}{2}I,\cdots, \frac{1}{2}I, \delta\left(P_2\right),A_{11}+ B_{12}\right)+ p_n\left ( \frac{1}{2}I,\cdots, \frac{1}{2}I,P_2, \delta\left(A_{11}+ B_{12}\right) \right)\\
=&\delta\left( p_n\left ( \frac{1}{2}I,\cdots, \frac{1}{2}I,P_2, A_{11}+ B_{12} \right)\right)\\
=&\delta\left( p_n\left ( \frac{1}{2}I,\cdots, \frac{1}{2}I, P_2,A_{11} \right)\right)+\delta\left( p_n\left ( \frac{1}{2}I,\cdots, \frac{1}{2}I,P_2, B_{12} \right)\right)\\
=&p_n\left ( \frac{1}{2}I,\cdots, \frac{1}{2}I,\delta\left(P_2\right), A_{11}\right)+ p_n\left ( \frac{1}{2}I,\cdots, \frac{1}{2}I,P_2, \delta\left(A_{11}\right) \right)\\
&+p_n\left ( \frac{1}{2}I,\cdots, \frac{1}{2}I, \delta\left(P_2\right),B_{12} \right)+p_n\left ( \frac{1}{2}I,\cdots, \frac{1}{2}I, P_2,\delta\left(B_{12}\right) \right).
\end{aligned}
$$ 
We therefore get 
$$
p_n\left (\frac{1}{2}I,\cdots, \frac{1}{2}I,P_2,M \right)=0.
$$
In light of Lemma \ref{xxsec2.3}, we obtain
$$
p_2\left ( P_2, M \right)=P_2 \diamond M=0.
$$
It follows from Lemma \ref{xxsec2.4} that 
$$
M_{12}=M_{21}=M_{22}=0.
$$
Notice that
$$
p_n\left ( \frac{1}{2}I,\cdots, \frac{1}{2}I,P_2-P_1, B_{12} \right)= ( P_2-P_1)\diamond B_{12}=0
$$
and
$$
p_n\left ( \frac{1}{2}I,\cdots, \frac{1}{2}I,P_2-P_1, A_{11}+ B_{12} \right)= p_n\left ( \frac{1}{2}I,\cdots, \frac{1}{2}I, P_2-P_1,A_{11}\right).
$$
By {\bf Claim 2}, we observe that 
$$
\begin{aligned}
&p_n\left ( \frac{1}{2}I,\cdots, \frac{1}{2}I, \delta(P_2-P_1),A_{11}+ B_{12} \right)\\
&+ p_n\left (\frac{1}{2}I,\cdots, \frac{1}{2}I, P_2-P_1, \delta\left(A_{11}+ B_{12}\right) \right)\\
=&\delta\left( p_n\left ( \frac{1}{2}I,\cdots, \frac{1}{2}I, P_2-P_1,A_{11}+ B_{12}\right)\right)\\
=&\delta\left( p_n\left ( \frac{1}{2}I,\cdots, \frac{1}{2}I, P_2-P_1,B_{12} \right)\right)+\delta\left( p_n\left (\frac{1}{2}I,\cdots, \frac{1}{2}I,  P_2-P_1,A_{11} \right)\right)\\
=&p_n\left ( \frac{1}{2}I,\cdots, \frac{1}{2}I,\delta(P_2-P_1), B_{12} \right)+p_n\left (\frac{1}{2}I,\cdots, \frac{1}{2}I, P_2-P_1, \delta\left(B_{12}\right) \right)\\
&+p_n\left (\frac{1}{2}I,\cdots, \frac{1}{2}I, \delta(P_2-P_1), A_{11}  \right)+ p_n\left ( \frac{1}{2}I,\cdots, \frac{1}{2}I, P_2-P_1, \delta\left(A_{11} \right) \right).
\end{aligned}
$$ 
Thus we arrive at
$$
p_n\left (\frac{1}{2}I,\cdots, \frac{1}{2}I, P_2-P_1, M \right)=0.
$$
Taking into account Lemma \ref{xxsec2.3}, we get
$$
p_2\left (  P_2-P_1, M \right)=(P_2-P_1)\diamond M=0.
$$
Applying Lemma \ref{xxsec2.4} yields that
$$
M_{11}=0.
$$
We therefore have $M=0$. That is, 
$$
\delta(A_{11}+B_{12})=\delta(A_{11})+\delta(B_{12}).
$$
The other cases can be verified by an analogous manner.

\vspace{2mm}

\noindent{\bf Claim 4.} For any $B_{12}\in \mathcal{A}_{12}, C_{21}\in \mathcal{A}_{21}$, we have
$$
\delta(B_{12}+C_{21})=\delta(B_{12})+\delta(C_{21}).
$$
We only need to show that
$$
M=\delta(B_{12}+C_{21})-\delta(B_{12})-\delta(C_{21})=0.
$$
Since
$$
p_n\left (\frac{1}{2}I,\cdots, \frac{1}{2}I,P_2-P_1, B_{12} \right)= (P_2-P_1)\diamond B_{12}=0
$$
and
$$
p_n\left ( \frac{1}{2}I,\cdots, \frac{1}{2}I,P_2-P_1,C_{21} \right)= p_n\left ( \frac{1}{2}I,\cdots, \frac{1}{2}I,P_2-P_1,B_{12}+C_{21} \right)=0,
$$
we obtain
$$
\begin{aligned}
&p_n\left ( \frac{1}{2}I,\cdots, \frac{1}{2}I,\delta(P_2-P_1), B_{12}+C_{21} \right)\\
&+ p_n\left ( \frac{1}{2}I,\cdots, \frac{1}{2}I,P_2-P_1, \delta\left(B_{12}+C_{21}\right) \right)\\
=&\delta\left( p_n\left ( \frac{1}{2}I,\cdots, \frac{1}{2}I,P_2-P_1, B_{12}+C_{21} \right)\right)\\
=&\delta\left( p_n\left ( \frac{1}{2}I,\cdots, \frac{1}{2}I, P_2-P_1,B_{12} \right)\right)+\delta\left( p_n\left ( \frac{1}{2}I,\cdots, \frac{1}{2}I,P_2-P_1, C_{21} \right)\right)\\
=&p_n\left ( \frac{1}{2}I,\cdots, \frac{1}{2}I, \delta(P_2-P_1),B_{12} \right)+p_n\left ( \frac{1}{2}I,\cdots, \frac{1}{2}I,P_2-P_1,\delta\left(B_{12}\right) \right)\\
&+p_n\left ( \frac{1}{2}I,\cdots, \frac{1}{2}I, \delta(P_2-P_1), C_{21} \right)+ p_n\left ( \frac{1}{2}I,\cdots, \frac{1}{2}I,P_2-P_1, \delta\left(C_{21}\right) \right).
\end{aligned}
$$ 
Hence, we have
$$
p_n\left ( \frac{1}{2}I,\cdots, \frac{1}{2}I, P_2-P_1,M \right)=0.
$$
Applying Lemma \ref{xxsec2.3} gives
$$
p_2\left ( P_2-P_1, M \right)=(P_2-P_1)\diamond M=0.
$$
By Lemma \ref{xxsec2.4}, we know that 
$$
M_{11}=M_{22}=0.
$$
Note that the facts
$$
p_n\left (\frac{1}{2}I,\cdots, \frac{1}{2}I,B_{12},P_1 \right)=0
$$
and
$$
p_n\left ( \frac{1}{2}I,\cdots, \frac{1}{2}I, B_{12}+C_{21},P_1\right)=p_n\left ( \frac{1}{2}I,\cdots, \frac{1}{2}I,C_{21},P_1 \right).
$$
Using similar computations as the above, we get
$$
p_n\left (\frac{1}{2}I,\cdots, \frac{1}{2}I,M,P_1 \right)=0.
$$
In view of Lemma \ref{xxsec2.3} and the fact $M_{11}=M_{22}=0$, one can see that
$$
M_{21}=0.
$$
On the other hand, we should remark that 
$$
p_n\left (\frac{1}{2}I,\cdots, \frac{1}{2}I, B_{12}+C_{21},P_2\right)=p_n\left (\frac{1}{2}I,\cdots, \frac{1}{2}I,B_{12},P_2 \right)
$$
and
$$
p_n\left ( \frac{1}{2}I,\cdots, \frac{1}{2}I,C_{21},P_2 \right)=0.
$$
Using similar arguments as the above, one can get $M_{12}=0$. 
\vspace{2mm}

\noindent{\bf Claim 5.} For all $A_{11}\in \mathcal{A}_{11}, D_{22}\in \mathcal{A}_{22}$, we have
$$
\delta(A_{11}+D_{22})=\delta(A_{11})+\delta(D_{22}).
$$
It is sufficient to prove that 
$$
M=\delta(A_{11}+D_{22})-\delta(A_{11})-\delta(D_{22})=0.
$$
Since
$$
p_n\left ( \frac{1}{2}I,\cdots, \frac{1}{2}I,P_2, A_{11} \right)=0
$$
and
$$
p_n\left ( \frac{1}{2}I,\cdots, \frac{1}{2}I,P_2,A_{11}+ D_{22} \right)= p_n\left (\frac{1}{2}I,\cdots, \frac{1}{2}I,P_2, D_{22} \right),
$$
we konw that 
$$
\begin{aligned}
&p_n\left (\frac{1}{2}I,\cdots, \frac{1}{2}I, \delta\left(P_2\right), A_{11}+ D_{22}\right)+ p_n\left ( \frac{1}{2}I,\cdots, \frac{1}{2}I,P_2, \delta\left(A_{11}+ D_{22}\right) \right)\\
=&\delta\left( p_n\left ( \frac{1}{2}I,\cdots, \frac{1}{2}I, P_2,A_{11}+ D_{22}\right)\right)\\
=&\delta\left( p_n\left ( \frac{1}{2}I,\cdots, \frac{1}{2}I, P_2,D_{22}\right)\right)+\delta\left( p_n\left ( \frac{1}{2}I,\cdots, \frac{1}{2}I,P_2, A_{11} \right)\right)\\
=&p_n\left ( \frac{1}{2}I,\cdots, \frac{1}{2}I,\delta\left(P_2\right),D_{22} \right)+p_n\left (\frac{1}{2}I,\cdots, \frac{1}{2}I, P_2,\delta\left(D_{22}\right) \right)\\
&+p_n\left (\frac{1}{2}I,\cdots, \frac{1}{2}I, \delta\left(P_2\right), A_{11}\right)+ p_n\left (\frac{1}{2}I,\cdots, \frac{1}{2}I, P_2, \delta\left(A_{11}\right) \right).
\end{aligned}
$$ 
Thus we obtain
$$
p_n\left ( \frac{1}{2}I,\cdots, \frac{1}{2}I, P_2,M \right)=0.
$$
By invoking of Lemma \ref{xxsec2.3}, we arrive at
$$
p_2\left ( P_2, M \right)=P_2 \diamond M=0.
$$
It follows from Lemma \ref{xxsec2.4} that
$$
M_{12}=M_{21}=M_{22}=0.
$$
We should remark that 
$$
p_n\left ( \frac{1}{2}I,\cdots, \frac{1}{2}I, P_1,D_{22} \right)= P_1\diamond D_{22}=0
$$
and that
$$
p_n\left (\frac{1}{2}I,\cdots, \frac{1}{2}I, P_1,A_{11}+D_{22} \right)=p_n\left (\frac{1}{2}I,\cdots, \frac{1}{2}I, P_1,A_{11}\right).
$$
Using similar discussions as the above, one can get 
$$
M_{11}=0.
$$
Hence we conclude that $M=0$. That is, 
$$
\delta(A_{11}+D_{22})=\delta(A_{11})+\delta(D_{22}).
$$

\noindent{\bf Claim 6.} For any $A_{11}\in \mathcal{A}_{11}, B_{12}\in \mathcal{A}_{12},C_{21}\in \mathcal{A}_{21}$ and $D_{22}\in \mathcal{A}_{22}$, we have
\begin{enumerate}
\item[(a)] $\delta(A_{11}+B_{12}+C_{21})=\delta(A_{11})+\delta(B_{12})+\delta(C_{21})$,  

\item[(b)]   $\delta(B_{12}+C_{21}+D_{22})=\delta(B_{12})+\delta(C_{21})+\delta(D_{22})$.
\end{enumerate}

Let us first prove the result (a). For convenience, let us set 
$$
M= \delta(A_{11}+B_{12}+C_{21})-\delta(A_{11})-\delta(B_{12})-\delta(C_{21}). 
$$
We shall prove that $M=0$. In view of the facts 
$$
p_n\left (\frac{1}{2}I,\cdots, \frac{1}{2}I , P_2, A_{11} \right)=0
$$
and
$$
p_n\left (\frac{1}{2}I,\cdots, \frac{1}{2}I , P_2, A_{11}+B_{12}+C_{21} \right)=p_n\left (\frac{1}{2}I,\cdots, \frac{1}{2}I , P_2, B_{12}+C_{21} \right),
$$
we by {\bf Claim 4} get
$$
\begin{aligned}
&p_n \left ( \frac{1}{2}I,\cdots, \frac{1}{2}I , \delta\left(P_2\right),A_{11}+B_{12}+C_{21} \right)\\
&+p_n\left (\frac{1}{2}I,\cdots, \frac{1}{2}I , P_2, \delta\left(A_{11}+B_{12}+C_{21} \right)\right)\\
=&\delta \left(p_n \left (\frac{1}{2}I,\cdots, \frac{1}{2}I , P_2, A_{11}+B_{12}+C_{21} \right)\right)\\
=&\delta \left(p_n \left ( \frac{1}{2}I,\cdots, \frac{1}{2}I , P_2,B_{12}+C_{21} \right)\right)+\delta \left(p_n \left ( \frac{1}{2}I,\cdots, \frac{1}{2}I ,P_2, A_{11} \right)\right)\\
=&p_n\left ( \frac{1}{2}I,\cdots, \frac{1}{2}I ,\delta\left(P_2\right), B_{12}  +C_{21}\right)+p_n\left(\frac{1}{2}I,\cdots, \frac{1}{2}I , P_2, \delta\left(B_{12} \right)+\delta\left(C_{21} \right)\right)\\
&+p_n\left ( \frac{1}{2}I,\cdots, \frac{1}{2}I ,\delta\left(P_2\right), A_{11} \right)+p_n\left (\frac{1}{2}I,\cdots, \frac{1}{2}I , P_2, \delta\left(A_{11} \right)\right)
\end{aligned}
$$
By Lemma \ref{xxsec2.3} we know that
$$
p_n\left (\frac{1}{2}I,\cdots, \frac{1}{2}I , P_2, M\right)=P_2\diamond M=0.
$$
It follows from Lemma \ref{xxsec2.4} that
$$
M_{12}=M_{21}=M_{22}=0.
$$
In order to show $M_{11}=0$, we should note that
$$
p_n\left (\frac{1}{2}I,\cdots, \frac{1}{2}I , P_2-P_1, C_{21} \right)=0
$$
and that
$$
p_n\left (\frac{1}{2}I,\cdots, \frac{1}{2}I , P_2-P_1, A_{11}+B_{12}+C_{21} \right)=p_n\left (\frac{1}{2}I,\cdots, \frac{1}{2}I , P_2-P_1, A_{11}+B_{12} \right).
$$
Using {\bf Claim 3}, we see that
$$
\begin{aligned}
&p_n\left ( \frac{1}{2}I,\cdots, \frac{1}{2}I, \delta(P_2-P_1),A_{11}+B_{12}+C_{21} \right)\\
&+ p_n\left (\frac{1}{2}I,\cdots, \frac{1}{2}I, P_2-P_1,\delta\left(A_{11}+B_{12}+C_{21}\right) \right)\\
=&\delta\left( p_n\left (\frac{1}{2}I,\cdots, \frac{1}{2}I,P_2-P_1, A_{11}+B_{12}+C_{21} \right)\right)\\
=&\delta\left( p_n\left (\frac{1}{2}I,\cdots, \frac{1}{2}I, P_2-P_1,A_{11}+B_{12} \right)\right)+\delta\left( p_n\left ( \frac{1}{2}I,\cdots, \frac{1}{2}I, P_2-P_1,C_{21} \right)\right)\\
=&p_n\left ( \frac{1}{2}I,\cdots, \frac{1}{2}I,\delta(P_2-P_1), A_{11}+B_{12} \right)+p_n\left (\frac{1}{2}I,\cdots, \frac{1}{2}I, P_2-P_1, \delta\left(A_{11}\right) +\delta\left(B_{12}\right)\right)\\
&+p_n\left (\frac{1}{2}I,\cdots, \frac{1}{2}I,\delta(P_2-P_1), C_{21} \right)+ p_n\left ( \frac{1}{2}I,\cdots, \frac{1}{2}I, P_2-P_1,\delta\left(C_{21}\right) \right).
\end{aligned}
$$ 
This implies that
 $$
0=p_n\left (\frac{1}{2}I,\cdots, \frac{1}{2}I , P_2-P_1, M\right)=(P_2-P_1)\diamond M.
$$ 
According to Lemma \ref{xxsec2.4}, we know that $M_{11}=0$. Thus we arrive at
$$
\delta(A_{11}+B_{12}+C_{21})=\delta(A_{11})+\delta(B_{12})+\delta(C_{21}) .
$$
Considering the relations
$$ 
\delta\left(p_n\left ( \frac{1}{2}I,\cdots, \frac{1}{2}I ,P_1,B_{12}+C_{21} +D_{22} \right)\right)
$$ 
and 
$$ 
\delta\left(p_n\left (\frac{1}{2}I,\cdots, \frac{1}{2}I , P_2-P_1,B_{12}+C_{21} +D_{22} \right)\right), 
$$
together with the previous calculations, we assert that 
$$
\delta(B_{12}+C_{21}+D_{22})=\delta(B_{12})+\delta(C_{21})+\delta(D_{22}).
$$

\noindent{\bf Claim 7.} For any $A_{11}\in \mathcal{A}_{11}, B_{12}\in \mathcal{A}_{12},C_{21}\in \mathcal{A}_{21}$ and $D_{22}\in \mathcal{A}_{22}$, we have
$$
 \delta(A_{11}+B_{12}+C_{21}+D_{22})=\delta(A_{11})+\delta(B_{12})+\delta(C_{21})+\delta(D_{22}).
$$ 
 
We only need to prove that
$$
M= \delta(A_{11}+B_{12}+C_{21}+D_{22})-\delta(A_{11})-\delta(B_{12})-\delta(C_{21})-\delta(D_{22})=0.
$$  
Note the facts that 
$$
p_n\left (\frac{1}{2}I,\cdots, \frac{1}{2}I , P_1, D_{22}\right)=0
$$ 
and 
$$p_n\left (\frac{1}{2}I,\cdots, \frac{1}{2}I , P_1, A_{11}+B_{12}+C_{21}+D_{22}\right)=p_n\left (\frac{1}{2}I,\cdots, \frac{1}{2}I , P_1, A_{11}+B_{12}+C_{21}\right).
$$
Applying {\bf Claim 6} (a) yields that
$$
\begin{aligned}
 &p_n \left ( \frac{1}{2}I,\cdots, \frac{1}{2}I , \delta\left(P_1\right), A_{11}+B_{12}+C_{21}+D_{22}\right)\\
 &+ p_n\left (\frac{1}{2}I,\cdots, \frac{1}{2}I , P_1, \delta\left(A_{11}+B_{12}+C_{21}+D_{22}\right) \right) \\
 =&  \delta\left(p_n \left ( \frac{1}{2}I,\cdots, \frac{1}{2}I ,P_1, A_{11}+B_{12}+C_{21}+D_{22}\right)\right) \\
 =&  \delta\left(p_n \left ( \frac{1}{2}I,\cdots, \frac{1}{2}I ,P_1, A_{11}+B_{12}+C_{21}\right)\right)+ \delta\left(p_n \left ( \frac{1}{2}I,\cdots, \frac{1}{2}I ,P_1, D_{22}\right) \right) \\ 
 =&p_n \left ( \frac{1}{2}I,\cdots, \frac{1}{2}I , \delta\left(P_1\right), A_{11}+B_{12}+C_{21}\right)+p_n\left ( \frac{1}{2}I,\cdots, \frac{1}{2}I , \delta\left(P_1\right),D_{22}\right)\\
 &+ p_n\left ( \frac{1}{2}I,\cdots, \frac{1}{2}I , P_1, \delta\left(A_{11}+B_{12}+C_{21}\right) \right) + p_n\left (\frac{1}{2}I,\cdots, \frac{1}{2}I ,  P_1, \delta\left(D_{22}\right) \right) \\ 
=&p_n \left ( \frac{1}{2}I,\cdots, \frac{1}{2}I , \delta\left(P_1\right), A_{11}+B_{12}+C_{21}+D_{22}\right)\\
&+ p_n\left ( \frac{1}{2}I,\cdots, \frac{1}{2}I , P_1, \delta\left(A_{11}\right)+\delta\left(B_{12}\right)+\delta\left(C_{21}\right)+\left(D_{22}\right) \right).
\end{aligned}
$$
Thus we obtain
$$
 0=p_n\left ( \frac{1}{2}I,\cdots, \frac{1}{2}I ,P_1,  M \right)= P_1 \diamond M. 
$$
So $M_{12}=M_{21}=M_{11}=0$ by Lemma \ref{xxsec2.4}.
\vspace{2mm}
 
Similarly, using the relations
$$
p_n\left ( \frac{1}{2}I,\cdots, \frac{1}{2}I ,P_2, A_{11}\right)=0
$$ 
and 
$$
p_n\left (\frac{1}{2}I,\cdots, \frac{1}{2}I , P_2, A_{11}+B_{12}+C_{21}+D_{22}\right)=p_n\left (\frac{1}{2}I,\cdots, \frac{1}{2}I , P_2, B_{12}+C_{21}+D_{22}\right),
$$
one can get $M_{22}=0$.  The proof of this claim is completed.
\vspace{2mm}

\noindent{\bf Claim 8.} For any $A_{ij},B_{ij}\in \mathcal{A}_{ij}\, (i,j=1,2)$, we have
$$
\delta(A_{ij}+B_{ij})=\delta(A_{ij})+\delta(B_{ij}).
$$  
 
\noindent{\bf Case 1:}  $i \neq j$.
 
Note that
$$
\begin{aligned}
 p_n\left (\frac{1}{2}I,\cdots, \frac{1}{2}I , P_i+A_{ij}, P_j+B_{ij}\right) =( P_i+A_{ij}) \diamond (P_j+B_{ij}) \\
 =A_{ij}+B_{ij}+A_{ij}^{\ast}+B_{ij} A_{ij}^{\ast}.
\end{aligned}
$$
In light of {\bf Claim 6}, we know that 
$$
 \begin{aligned}
\ \ \  &\delta\left(p_n\left (\frac{1}{2}I,\cdots, \frac{1}{2}I , P_i+A_{ij}, P_j+B_{ij}\right)\right) \\
 =&\delta(A_{ij}+B_{ij})+\delta\left(A_{ij}^{\ast}\right)+\delta\left( B_{ij} A_{ij}^{\ast}\right).
\end{aligned}\eqno(3.2)
$$ 
 On the other hand, we by {\bf Claim 3} and {\bf Claim 4} have
 $$
 \begin{aligned}
&\delta\left(p_n\left ( \frac{1}{2}I,\cdots, \frac{1}{2}I ,P_i+A_{ij}, P_j+B_{ij}\right)\right) \\
=&p_n\left (\frac{1}{2}I,\cdots, \frac{1}{2}I , \delta\left(P_i+A_{ij}\right), P_j+B_{ij}\right)+ p_n\left ( \frac{1}{2}I,\cdots, \frac{1}{2}I ,P_i+A_{ij}, \delta\left(P_j+B_{ij}\right) \right)\\
=&p_n\left ( \frac{1}{2}I,\cdots, \frac{1}{2}I ,\delta\left(P_i \right), P_j\right)+ p_n\left (\frac{1}{2}I,\cdots, \frac{1}{2}I , \delta\left(A_{ij}\right),P_j\right)\\
 &+p_n\left ( \frac{1}{2}I,\cdots, \frac{1}{2}I ,\delta\left(P_i\right), B_{ij}\right)+ p_n\left (\frac{1}{2}I,\cdots, \frac{1}{2}I ,\delta\left(A_{ij}\right), B_{ij} \right)\\
&+p_n\left (\frac{1}{2}I,\cdots, \frac{1}{2}I , P_i, \delta\left(P_j\right)\right)+ p_n\left ( \frac{1}{2}I,\cdots, \frac{1}{2}I ,P_i, \delta\left(B_{ij}\right) \right)\\
&+p_n\left (\frac{1}{2}I,\cdots, \frac{1}{2}I ,A_{ij},  \delta\left(P_j\right)\right)+ p_n\left ( \frac{1}{2}I,\cdots, \frac{1}{2}I ,A_{ij}, \delta\left(B_{ij}\right) \right)\\
=&\delta\left(p_n\left ( \frac{1}{2}I,\cdots, \frac{1}{2}I ,P_i, P_j\right)\right)+ \delta\left(p_n\left (\frac{1}{2}I,\cdots, \frac{1}{2}I ,P_i, B_{ij}\right) \right)\\
&+\delta\left(p_n\left (\frac{1}{2}I,\cdots, \frac{1}{2}I ,A_{ij},  P_j\right)\right)+ \delta\left(p_n\left (\frac{1}{2}I,\cdots, \frac{1}{2}I ,A_{ij}, B_{ij}\right) \right)\\
=&\delta\left( P_i \diamond B_{ij}\right)+\delta\left(A_{ij} \diamond P_j\right)+ \delta\left(A_{ij}\diamond B_{ij}\right)\\
 =&\delta(B_{ij})+\delta\left(A_{ij}+A_{ij}^{\ast}\right)+\delta\left(B_{ij} A_{ij}^{\ast}\right)\\
 =&\delta\left(B_{ij}\right)+\delta\left(A_{ij}\right)+\delta\left(A_{ij}^{\ast}\right)+\delta\left(B_{ij} A_{ij}^{\ast}\right).
\end{aligned}\eqno(3.3)
$$   
Compare (3.2) with (3.3) gives
$$
\delta\left(B_{ij}+A_{ij}\right)=\delta\left(B_{ij}\right)+\delta\left(A_{ij}\right). 
$$

\noindent{\bf Case 2:}  $i=j$. 
\vspace{2mm}
  
Let us set $M=\delta(A_{ii}+B_{ii})-\delta(A_{ii})-\delta(B_{ii})$. Let us take $l=1,2$, but $l \neq i$. Since
$$
p_n\left (\frac{1}{2}I,\cdots, \frac{1}{2}I , P_l, A_{ii}\right) =0
$$
and
$$
p_n\left (\frac{1}{2}I,\cdots, \frac{1}{2}I , P_l, B_{ii}\right) =p_n\left (\frac{1}{2}I,\cdots, \frac{1}{2}I , P_l, A_{ii}+B_{ii}\right)=0,
$$ 
we know that
 $$
 \begin{aligned}
& p_n\left (\frac{1}{2}I,\cdots, \frac{1}{2}I , \delta\left(P_l\right), A_{ii}+B_{ii}\right)+p_n\left (\frac{1}{2}I,\cdots, \frac{1}{2}I , P_l,\delta\left(A_{ii}+B_{ii}\right) \right) \\
=&\delta\left(p_n\left (\frac{1}{2}I,\cdots, \frac{1}{2}I , P_l, A_{ii}+B_{ii}\right)\right) \\
=&\delta\left(p_n\left (\frac{1}{2}I,\cdots, \frac{1}{2}I , P_l, A_{ii}\right)\right)+\delta\left(p_n\left (\frac{1}{2}I,\cdots, \frac{1}{2}I , P_l, B_{ii}\right)\right) \\
=& p_n\left (\frac{1}{2}I,\cdots, \frac{1}{2}I , \delta\left( P_l\right),A_{ii}\right)+p_n\left (\frac{1}{2}I,\cdots, \frac{1}{2}I ,\delta\left( P_l\right), B_{ii}\right)\\
&+p_n\left ( \frac{1}{2}I,\cdots, \frac{1}{2}I ,P_l, \delta\left(A_{ii}\right) \right)+p_n\left ( \frac{1}{2}I,\cdots, \frac{1}{2}I ,P_l, \delta\left(B_{ii}\right) \right). 
\end{aligned}
$$
Then we have
$$
p_n\left ( \frac{1}{2}I,\cdots, \frac{1}{2}I ,P_l, M \right)=P_l \diamond M=0.
$$ 
By invoking of Lemma \ref{xxsec2.4}, we arrive at $M_{li}=M_{il}=M_{ll}=0$. 
\vspace{2mm}
  
The last step is to show that $M_{ii}=0$. Since 
$$
\begin{aligned}
  p_n\left ( \frac{1}{2}I,\cdots, \frac{1}{2}I ,P_l,C_{li}, A_{ii}\right)=&C_{li}A_{ii}+A_{ii}C_{li}^{\ast},\\
  p_n\left (\frac{1}{2}I,\cdots, \frac{1}{2}I ,P_l,C_{li}, B_{ii}\right)=&C_{li}B_{ii}+B_{ii}C_{li}^{\ast},\\
\end{aligned}
$$ 
and by {\bf Case 1} of this claim and {\bf Claim 4}, we have
$$
\begin{aligned}
&p_n\left (\frac{1}{2}I,\cdots, \frac{1}{2}I , \delta(P_l),C_{li},A_{ii}+B_{ii}\right)+p_n\left (\frac{1}{2}I,\cdots, \frac{1}{2}I ,P_l,\delta(C_{li}), A_{ii}+B_{ii}\right) \\ 
  &+p_n\left ( \frac{1}{2}I,\cdots, \frac{1}{2}I ,P_l,C_{li}, \delta(A_{ii}+B_{ii})\right)  \\
  =&\delta\left( p_n\left( \frac{1}{2}I,\cdots, \frac{1}{2}I ,P_l,C_{li}, A_{ii}+B_{ii}\right) \right)\\ 
=&\delta\left( p_3\left(P_l,C_{li}, A_{ii}+B_{ii}\right) \right)\\ 
  =&\delta \left(  C_{li}(A_{ii}+B_{ii})+(A_{ii}+B{ii})C_{li}^{\ast}\right)\\
  =&\delta (C_{li}A_{ii}+A_{ii}C_{li}^{\ast})+\delta ( C_{li}B_{ii}+B{ii}C_{li}^{\ast})\\  
  =&\delta\left( p_n\left( \frac{1}{2}I,\cdots, \frac{1}{2}I ,P_l,C_{li}, A_{ii}\right) \right)+\delta\left( p_n\left( \frac{1}{2}I,\cdots, \frac{1}{2}I , P_l,C_{li},B_{ii}\right) \right)\\ 
  =&p_n\left (\frac{1}{2}I,\cdots, \frac{1}{2}I ,\delta(P_l),C_{li}, A_{ii}\right)+ p_n\left (\frac{1}{2}I,\cdots, \frac{1}{2}I ,\delta(P_l),C_{li}, B_{ii}\right)\\
  &+p_n\left (\frac{1}{2}I,\cdots, \frac{1}{2}I ,P_l,\delta(C_{li}), A_{ii}\right) +p_n\left (\frac{1}{2}I,\cdots, \frac{1}{2}I ,P_l,\delta(C_{li}), B_{ii}\right)\\ 
&+p_n\left ( \frac{1}{2}I,\cdots, \frac{1}{2}I ,P_l,C_{li}, \delta(A_{ii})\right)  +p_n\left ( \frac{1}{2}I,\cdots, \frac{1}{2}I ,P_l,C_{li}, \delta(B_{ii})\right)  \\  
\end{aligned}
$$
Thus we obtain
$$
\begin{aligned}
0=&p_n\left (\frac{1}{2}I,\cdots, \frac{1}{2}I ,P_l, C_{li},M\right)\\
=& p_{3}\left ( P_l,C_{li},M\right) \\
=&C_{li}\diamond M.
\end{aligned}
$$  
It follows that $C_{li}M_{ii}+M_{ii}C_{li}^{\ast}=0$. That is, $M_{ii}C_{li}^{\ast}=0$ for 
all $C_{li} \in \mathcal{A}_{li}$. Note that $\overline{I-P}=I$. In light of Lemma \ref{xxsec2.2}, 
we conclude that $M_{ii}=0$.
\vspace{2mm}

As an immediate consequence of the previous Claims, we have

\vspace{2mm}

\noindent {\bf Claim 9.}  $\delta$ is an additive mapping.

\vspace{2mm}
 
Let us next show that $\delta$ is a $\ast$-derivation.

\vspace{2mm}
 
\noindent {\bf Claim 10.}  For any $A\in \mathcal{A}$, we have  $\delta(A^{\ast})=\delta(A)^{\ast}$.

\vspace{2mm}
 
In view of Lemma \ref{xxsec2.3}, we konw that
$$
p_n\left ( A, \frac{1}{2}I,\cdots, \frac{1}{2}I \right)=\frac{1}{2}(A+A^{\ast}).
$$
It follows that
$$
\begin{aligned}
\frac{1}{2}(\delta(A)+\delta(A^{\ast}))=&\delta\left(p_n\left ( A,  \frac{1}{2}I,\cdots, \frac{1}{2}I \right)\right)\\
=&p_n\left ( \delta\left(A\right),  \frac{1}{2}I,\cdots, \frac{1}{2}I \right)\\
=&\frac{1}{2}(\delta(A)+\delta(A)^{\ast}).
\end{aligned}
$$
Thus we get $\delta(A^{\ast})=\delta(A)^{\ast}$.

\vspace{2mm} 
 
We next prove that $\delta$ is actually a derivation.

\vspace{2mm}

\noindent {\bf Claim 11.}  For any $A,B \in \mathcal{A}$, we have  $\delta(AB)=\delta(A)B+A\delta(B).$

Since
$$
p_n\left (\frac{1}{2}I,\cdots, \frac{1}{2}I, A, B\right) =A\diamond B=AB+BA^{\ast},
$$
we obtain
$$
\begin{aligned}
\delta( AB+BA^{\ast})=&\delta\left( p_n\left (\frac{1}{2}I,\cdots, \frac{1}{2}I , A,B\right) \right)\\
=&p_n\left (\frac{1}{2}I,\cdots, \frac{1}{2}I , \delta\left( A\right),B\right) +p_n\left (\frac{1}{2}I,\cdots, \frac{1}{2}I , A,\delta\left( B\right) \right)\\
=&\delta(A)\diamond B+A\diamond \delta(B)\\
=&\delta(A)B+B\delta(A)^{\ast}+A\delta(B)+\delta(B)A^{\ast}.
 \end{aligned}
$$ 
It follows that
$$
\delta( AB)+\delta(BA^{\ast})=\delta(A)B+B\delta(A)^{\ast}+A\delta(B)+\delta(B)A^{\ast}.
\eqno(3.4)
$$ 
Replacing $A$ (resp. $B$) by $\text{i}A$ (resp. $\text{i}B$) in (3.4) and using {\bf Claim 9}, we arrive at
$$
\delta( AB)-\delta(BA^{\ast})=\delta(A)B-B\delta(A)^{\ast}+A\delta(B)-\delta(B)A^{\ast}.\eqno(3.5)
$$ 
Combining (3.4) with (3.5) gives
$$
\delta( AB)=\delta(A)B+A\delta(B).
$$ 
\end{proof}

By an analogous manner, we can prove

\begin{theorem}\label{xxsec3.2}
Let $\mathcal{B(H)}$ be the algebra of all bounded linear operators on a complex Hilbert space 
$\mathcal{H}$ and $\mathcal{A}\subseteq \mathcal{B(H)}$ be a factor von Neumann algebra. 
Then a mapping $\delta: \mathcal{A} \longrightarrow  \mathcal{A}$ satisfies the rule
$$
\delta(p_n(A_1, A_2,\cdots, A_n))=\sum_{k=1}^n
p_n(A_1,\cdots, A_{k-1}, \delta(A_k), A_{k+1},\cdots, A_n)
$$
for all $A_1, A_2,\cdots, A_n \in \mathcal{A}$ if and only if $\delta$ is an additive $\ast$-derivation.
\end{theorem}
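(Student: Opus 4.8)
The plan is to recognize that a factor von Neumann algebra is a particularly rigid instance of the setting of Theorem~\ref{xxsec3.1}, so the eleven-claim argument above transfers almost verbatim once the structural inputs are re-established in the factor setting. Since a factor has center $\mathbb{C}I$, the hypothesis ``without central summands of type $I_1$'' collapses to the single condition that $\mathcal{A}$ itself is not of type $I_1$, i.e. $\mathcal{A}\neq\mathbb{C}I$; this is the operative assumption and I will read Theorem~\ref{xxsec3.2} with it in force.

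First I would dispose of the easy direction. If $\delta$ is an additive $\ast$-derivation, then from $\delta(XY)=\delta(X)Y+X\delta(Y)$ and $\delta(X^\ast)=\delta(X)^\ast$ one computes directly that $\delta(X\diamond Y)=\delta(X)\diamond Y+X\diamond\delta(Y)$, and a one-line induction on $n$ (splitting $p_n=p_{n-1}\diamond x_n$ and applying the inductive hypothesis to $p_{n-1}$) yields the displayed functional equation. This computation is identical in both theorems and needs no separate treatment.

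For the substantive direction, the key point is that every lemma invoked in the proof of Theorem~\ref{xxsec3.1} is available for a factor. Because $\mathcal{A}\neq\mathbb{C}I$, it contains a nontrivial projection $P$; set $P_1=P$ and $P_2=I-P$. As the only central projections are $0$ and $I$, each $P_i$ is core-free ($\underline{P_1}=\underline{P_2}=0$), while the central carrier of any nonzero element is $I$, so $\overline{P_1}=\overline{P_2}=I$. Thus the existence statement supplied by Lemma~\ref{xxsec2.1} in the general case is here furnished directly by the factor hypothesis, and Lemma~\ref{xxsec2.2} applies since $\overline{I-P}=I$; Lemmas~\ref{xxsec2.3} and~\ref{xxsec2.4} use nothing beyond the Peirce decomposition $\mathcal{A}=\sum_{i,j}\mathcal{A}_{ij}$ with $\mathcal{A}_{ij}=P_i\mathcal{A}P_j$, valid for any complementary pair of projections. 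With these inputs fixed, I would run Claims~1 through~11 exactly as before: additivity on the various Peirce pieces (Claims~3--8), global additivity (Claim~9), the $\ast$-preserving property through Lemma~\ref{xxsec2.3} (Claim~10), and finally the Leibniz rule by polarizing the identity $\delta(AB)+\delta(BA^\ast)=\delta(A)B+B\delta(A)^\ast+A\delta(B)+\delta(B)A^\ast$ via the substitution $A\mapsto\mathrm{i}A$, $B\mapsto\mathrm{i}B$ (Claim~11).

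I do not anticipate a genuine obstacle, since the factor case is strictly more constrained than the general one. The only place warranting care is the appeal to Lemma~\ref{xxsec2.2} in Case~2 of Claim~8, where $M_{ii}=0$ is deduced from $M_{ii}C_{li}^\ast=0$ for all $C_{li}\in\mathcal{A}_{li}$; this remains valid because $\overline{I-P}=I$. A minor bookkeeping difference from Theorem~\ref{xxsec3.1} is that here $\delta$ is assumed to map into $\mathcal{A}$ rather than into $\mathcal{B}(\mathcal{H})$, so the intermediate quantities $M$ and their Peirce components already lie in $\mathcal{A}$, and the final conclusion that $\delta$ is an additive $\ast$-derivation of $\mathcal{A}$ into itself follows without ever leaving the algebra.
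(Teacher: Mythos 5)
Your proposal is correct and takes essentially the same approach the paper intends: the paper omits the proof of Theorem \ref{xxsec3.2}, asserting only that it follows by the method of Theorem \ref{xxsec3.1}, and your work of checking that a nontrivial factor directly furnishes the needed structural inputs (a nontrivial projection $P$ with $\underline{P_1}=\underline{P_2}=0$ and $\overline{P_1}=\overline{P_2}=I$, so that Lemmas \ref{xxsec2.2}--\ref{xxsec2.4} and all eleven claims go through verbatim) is precisely that method. Your explicit restriction to $\mathcal{A}\neq\mathbb{C}I$ is also the right reading of the statement, since a factor of type $I_1$ is itself a central summand of type $I_1$ and the Peirce-decomposition argument has no nontrivial projection to run on in that case.
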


$\mathcal{B}(\mathcal{H})$ denotes the algebra of all bounded linear 
operators on a complex Hilbert space $\mathcal{H}$. Let us denote the subalgebra of 
all bounded finite rank operators by $\mathcal{F}(\mathcal{H})\subseteq \mathcal{B}(\mathcal{H})$.
We call a subalgebra $\mathcal{A}$ of $\mathcal{B(H)}$ a \textit{standard operator algebra} 
if it contains $\mathcal{F}(\mathcal{H})$. It should be remarked that a standard operator 
algebra is not necessarily closed in the sense of weak operator topology. This is quite 
different from von Neumann algebras which are always weakly closed.

From ring theoretic prespective, standard operator algebras and factor von Neumann 
algebras are both prime, whereas von Neumann algebras are usually semiprime. 
Recall that an algebra $\mathcal{A}$ is \textit{prime} if $A\mathcal{A}B=\{ 0\}$ impliess 
either $A=0$ or $B=0$. An algebra is \textit{semiprime} if $A\mathcal{A}A=\{ 0\}$ impliess $A=0$. 
Every standard operator algebra has the center $\Bbb{C}I$, which is also the center of  arbitrary 
factor von Neumann algebra. An operator $P\in \mathcal{B}(\mathcal{H})$ is said to be 
a \textit{projection} provided $P^{\ast}=P$ and $P^2=P$. Any operator $A \in \mathcal{B}(\mathcal{H})$
can be expressed as $A=\mathfrak{R}A+\text{i}\mathfrak{I}A$, where i is the 
imaginary unit, $\mathfrak{R}A=\frac{A+A^{\ast}}{2}$ and $\mathfrak{I}A=\frac{A-A^{\ast}}{2\text{i}}$. 
Note that both
 $\mathfrak{R}A$ and  $\mathfrak{I}A$ are self-adjoint.

Combining our current methods with the techniques of \cite{Lin1}, one can get

\begin{theorem}\label{xxsec3.3}
Let $\mathcal{H}$ be an infinite dimensional complex Hilbert space and $\mathcal{A}$ be a 
standard operator algebra on $\mathcal{H}$ containing the identity operator $I$.
Suppose that $\mathcal{A}$ is closed under the adjoint operation.  
Then a mapping $\delta: \mathcal{A} \longrightarrow  \mathcal{B(H)}$ satisfies the rule
$$
\delta(p_n(A_1, A_2,\cdots, A_n))=\sum_{k=1}^n
p_n(A_1,\cdots, A_{k-1}, \delta(A_k), A_{k+1},\cdots, A_n)
$$
for all $A_1, A_2,\cdots, A_n \in \mathcal{A}$ if and only if $\delta$ is an additive $\ast$-derivation.
\end{theorem}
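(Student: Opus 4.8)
The plan is to prove the nontrivial ``only if'' direction; the converse is a routine verification that an additive $\ast$-derivation commutes with each $p_n$ by expanding the $\diamond$-products and repeatedly using $\delta(A^\ast)=\delta(A)^\ast$ together with the Leibniz rule. The central mechanism for the forward direction is Lemma \ref{xxsec2.3}: by feeding the scalar $\frac{1}{2}I$ into the redundant slots of $p_n$, one collapses the $n$-variable identity either to the defining $\ast$-Jordan $2$-product $X\diamond Y$ or to a $3$-fold product, so that the entire argument can be run with the low-order machinery already developed for $\ast$-Jordan and $\ast$-Jordan triple derivations. Throughout I would work with the Peirce decomposition $\mathcal{A}=\sum_{i,j=1}^2\mathcal{A}_{ij}$ attached to the core-free projection $P$ with $\overline{P}=I$ furnished by Lemma \ref{xxsec2.1}.

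First I would record the two normalizations $\delta(0)=0$ and $\delta\left(\frac{1}{2}I\right)\diamond A=0$ for all $A$; the latter follows by applying the $n$-derivation identity to $p_n\left(\frac{1}{2}I,\dots,\frac{1}{2}I\right)=\frac{1}{2}I$ and then to $p_n\left(\frac{1}{2}I,\dots,\frac{1}{2}I,A\right)=A$, which cancels every ``spectator'' $\delta\left(\frac{1}{2}I\right)$ contribution. Additivity is then assembled block by block, always by the same engine: given elements $X,Y$ whose sum I wish to split, I choose a seed $S\in\{P_1,P_2,P_2-P_1\}$ so that $S\diamond X$ and $S\diamond Y$ have controlled Peirce support, insert it into $p_n\left(\frac{1}{2}I,\dots,\frac{1}{2}I,S,\,\cdot\,\right)$, and apply the identity together with an already-known additive relation. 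The difference $M=\delta(X+Y)-\delta(X)-\delta(Y)$ then satisfies $S\diamond M=0$, and Lemma \ref{xxsec2.4} annihilates the corresponding Peirce components of $M$. Iterating over the three choices of $S$ (and occasionally over a right-hand seed $P_j$) forces $M=0$. In this way I would obtain additivity across $\mathcal{A}_{ll}+\mathcal{A}_{ij}$, across $\mathcal{A}_{12}+\mathcal{A}_{21}$, across $\mathcal{A}_{11}+\mathcal{A}_{22}$, then for triple and quadruple sums, and finally within a single block.

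Once $\delta$ is additive I would extract the two algebraic properties. The $\ast$-identity $\delta(A^\ast)=\delta(A)^\ast$ drops out at once from $p_n\left(A,\frac{1}{2}I,\dots,\frac{1}{2}I\right)=\frac{1}{2}(A+A^\ast)$ by comparing $\delta$ of the left side with the single surviving term on the right. For the Leibniz rule I would use $p_n\left(\frac{1}{2}I,\dots,\frac{1}{2}I,A,B\right)=A\diamond B=AB+BA^\ast$ to get $\delta(AB)+\delta(BA^\ast)=\delta(A)\diamond B+A\diamond\delta(B)$, and then substitute $A\mapsto \mathrm{i}A$ and $B\mapsto \mathrm{i}B$---legitimate now that $\delta$ is additive and $\ast$-preserving---to produce a companion relation in which the sign of the $BA^\ast$-terms is reversed; adding the two yields $\delta(AB)=\delta(A)B+A\delta(B)$.

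I expect the genuine obstacle to be additivity \emph{within} a single diagonal block $\mathcal{A}_{ii}$. Here the seed-projection trick controls only the off-diagonal and complementary-diagonal components of $M$, leaving $M_{ii}$ undetermined, precisely because $P_l\diamond A_{ii}=0$ for $l\ne i$ erases the relevant information. To capture $M_{ii}$ I would introduce an auxiliary off-diagonal element $C_{li}\in\mathcal{A}_{li}$ and work with the three-fold product $p_3(P_l,C_{li},A_{ii})=C_{li}A_{ii}+A_{ii}C_{li}^\ast$, feeding in the previously established cross-block additivity to reduce to $C_{li}\diamond M=0$, that is $M_{ii}C_{li}^\ast=0$ for all $C_{li}\in\mathcal{A}_{li}$. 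Since $\overline{I-P}=I$, faithfulness via Lemma \ref{xxsec2.2} then forces $M_{ii}=0$. Closing the bookkeeping of this three-variable step---so that each intermediate term is covered by a previously proven additivity relation---is the delicate part of the whole proof.
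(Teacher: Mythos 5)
Your claim-by-claim architecture (normalization at $\frac{1}{2}I$, block additivity via the seeds $P_1$, $P_2$, $P_2-P_1$, the three-variable trick with $p_3(P_l,C_{li},A_{ii})$ for in-block additivity, then the $\ast$-property and the $\mathrm{i}A$, $\mathrm{i}B$ polarization for the Leibniz rule) is exactly the route the paper prescribes: it is the proof of Theorem \ref{xxsec3.1}, and the paper states that Theorem \ref{xxsec3.3} follows by the same method \emph{combined with the techniques of} \cite{Lin1}. The gap in your proposal is precisely the part that those techniques are supposed to supply: both structural lemmas you lean on are von Neumann algebra statements and their hypotheses are never satisfied by the algebra $\mathcal{A}$ of Theorem \ref{xxsec3.3}. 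A standard operator algebra need not be weakly closed (the paper emphasizes this), so it is not a von Neumann algebra, and the condition ``without central summands of type $I_1$'' is vacuous for it; hence Lemma \ref{xxsec2.1} cannot ``furnish'' your core-free projection $P$ with $\overline{P}=I$, and Lemma \ref{xxsec2.2} cannot be invoked to force $M_{ii}=0$ in your final step. Indeed, the notions of core and central carrier trivialize here, because the center of a standard operator algebra is $\mathbb{C}I$.

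The repair is elementary but must be said, since two crucial steps rest on it. The projection generating the Peirce decomposition comes not from Lemma \ref{xxsec2.1} but from the defining property $\mathcal{F}(\mathcal{H})\subseteq\mathcal{A}$: take $P_1$ to be any nonzero finite-rank (e.g.\ rank-one) projection and $P_2=I-P_1$; both lie in $\mathcal{A}$ because $\mathcal{A}$ contains the finite-rank operators and $I$, and Lemma \ref{xxsec2.4} and all the claim computations go through verbatim, being purely algebraic. The annihilation step replacing Lemma \ref{xxsec2.2} is proved directly with rank-one operators: if $T\in\mathcal{B(H)}$ satisfies $TBP_l=0$ for all $B\in\mathcal{A}$, choose a unit vector $e$ with $P_le=e$ and let $B$ be the rank-one operator $z\mapsto\langle z,e\rangle x$; then $0=TBP_l(e)=Tx$ for every $x\in\mathcal{H}$, so $T=0$. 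Applying this with $l\neq i$ to the relation $M_{ii}C_{li}^{\ast}=0$ (note $C_{li}^{\ast}\in\mathcal{A}_{il}$ because $\mathcal{A}$ is adjoint-closed, and $\mathcal{A}_{il}$ contains all operators of the form $(P_ix)\otimes(P_le)$) yields $M_{ii}=0$. With these two substitutions your outline closes; as written, it cites lemmas that do not apply to the theorem being proved.
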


We must point out that the technical routes and proving methods of Theorems \ref{xxsec3.2} and \ref{xxsec3.3} 
are fairly similar to those of Theorem \ref{xxsec3.1}, and hence its proofs are omitted here for saving space.

\section{Related Topics for Future Research}
\label{xxsec4}

The main purpose of this article is to concentrate on studying nonlinear $\ast$-Jordan-type derivations 
on operator algebras. The involved operator algebras are based on the algebra $\mathcal{B(H)}$ 
of all bounded linear operators on a complex Hilbert space $\mathcal{H}$, such as standard operator algebras, 
factor von Neumann algebras, von Neumann algebras without central summands of type $I_1$. 
Note that, unlike von Neumann algebras which are always weakly closed, 
a standard operator algebra is not necessarily closed.  The current
work together with \cite{Jing, LiLuFang2, LiZhaoChen, Lin1, Lin2, TaghaviRohiDarvish, Zhang, YuZhang, ZhaoLi1} 
indicates that it is feasible to investigate $\ast$-Jordan-type derivations and $\ast$-Lie-type 
derivations on operator algebras under a unified framework---$\eta$-$\ast$-Jordan-type derivations. 
We have good reasons to believe that characterizing $\eta$-$\ast$-Jordan-type 
derivations on operator algebras is also of great interest. In the light of the motivation 
and contents of this article, we would like to end this article by proposing  several open questions.

Let $\mathcal{A}$ be an associative $\ast$-algebra over the complex field $\mathbb{C}$
and $\eta$ be a non-zero scalar.  For any $A, B\in \mathcal{A}$, we can denote a ``new product" of $A$ and $B$ by 
$A\diamond_{\eta} B=AB+\eta BA^\ast$. This new product $\diamond_{\eta}$ is 
usually said to be \textit{$\eta$-$\ast$-Jordan product}.  Clearly, $1$-$\ast$-Jordan product $\diamond_1$ is the 
so-called $\ast$-Jordan product, and $(-1)$-$\ast$-Jordan product $\diamond_{-1}$ is the so-called $\ast$-Lie product. Therefore, 
it is reasonable to say that $\eta$-$\ast$-Jordan products organically unify  
$\ast$-Jordan products with $\ast$-Lie products.  There are considerable works which are 
devoted to the study of mappings preserving the $\eta$-$\ast$-Jordan product 
between $\ast$-algebras, see \cite{CuiLi, DaiLu, HuoZhengLiu, LiLu, LiLuFang1, Molnar4, Molnar5, ZhaoLi2} 
and the references therein.

Let $\delta: \mathcal{A}\longrightarrow \mathcal{A}$ be a mapping 
(without the additivity assumption). We say that $\delta$ is a 
\textit{nonlinear $\eta$-$\ast$-Jordan derivation} if
$$
\delta(A \diamond_{\eta} B)=\delta(A) \diamond_{\eta} B+A \diamond_{\eta} \delta(B),
$$
holds true for all $A,B\in \mathcal{A}$. Similarly, a mapping $\delta:
\mathcal{A}\longrightarrow \mathcal{A}$ is called a \textit{nonlinear $\eta$-$\ast$-Jordan triple derivation} if it satisfies the condition
$$
\delta(A \diamond_{\eta} B \diamond_{\eta} C)=\delta(A) \diamond_{\eta} B \diamond_{\eta} C+A \diamond_{\eta} \delta(B)\diamond_{\eta} C+A \diamond_{\eta} B \diamond_{\eta} \delta(C)
$$
for all $A, B, C\in \mathcal{A}$, where $A \diamond_{\eta} B \diamond_{\eta} C=(A \diamond_{\eta} B) \diamond_{\eta} C$ . 
We should note that $\diamond$ is not necessarily associative.  

Taking into account the definitions of $\eta$-$\ast$-Jordan derivations and $\eta$-$\ast$-Jordan 
triple derivations, one can propose one much more common notion. Suppose that $n\geq 2$ is a fixed positive
integer. Let us see a sequence of polynomials with scalar $\eta$ and $\ast$
$$
\begin{aligned}
p_1(x_1)&=x_1,\\
p_2(x_1,x_2)&=x_1 \diamond_{\eta} x_2=x_1x_2+\eta x_2x_1^\ast,\\
p_3(x_1,x_2,x_3)&=p_2(x_1,x_2) \diamond_{\eta} x_3=(x_1 \diamond_{\eta} x_2) \diamond_{\eta} x_3,\\
p_4(x_1,x_2,x_3,x_4)&=p_3(x_1,x_2,x_3) \diamond_{\eta} x_4=((x_1 \diamond_{\eta} x_2) \diamond_{\eta} x_3) \diamond_{\eta} x_4,\\
\cdots &\cdots,\\
p_n(x_1,x_2,\cdots,x_n)&=p_{n-1}(x_1,x_2,\cdots,x_{n-1}) \diamond_{\eta} x_n\\
&=\underbrace{(\cdots ((}_{n-2}x_1 \diamond_{\eta} x_2)\diamond_{\eta} x_3)\diamond_{\eta} \cdots \diamond_{\eta} x_{n-1}) \diamond_{\eta} x_n.
\end{aligned}
$$
Accordingly, a \textit{nonlinear $\eta$-$\ast$-Jordan $n$-derivation} is a mapping $\delta:
\mathcal{A} \longrightarrow \mathcal{A}$ satisfying the condition
$$
\delta(p_n(A_1, A_2,\cdots, A_n))=\sum_{k=1}^n
p_n(A_1,\cdots, A_{k-1}, \delta(A_k), A_{k+1},\cdots, A_n)
$$
for all $A_1,A_2,\cdots, A_n\in \mathcal{A}$. This notion is motivated by the definition of $\ast$-Jordan-type derivations 
and that of $\ast$-Lie-type derivations. Then each $\ast$-Jordan derivation is a 
$1$-$\ast$-Jordan 2-derivation and every $\ast$-Jordan triple derivation is a $1$-$\ast$-Jordan 3-derivation. 
Likewise, each $\ast$-Lie derivation is a 
$(-1)$-$\ast$-Jordan 2-derivation and every $\ast$-Lie triple derivation is a $(-1)$-$\ast$-Jordan 3-derivation. 
$\eta$-$\ast$-Jordan 2-derivations, $\eta$-$\ast$-Jordan 3-derivations and $\eta$-$\ast$-Jordan $n$-derivations 
are collectively referred to as \textit{$\eta$-$\ast$-Jordan-type derivations}. 
$\eta$-$\ast$-Jordan-type derivations on operator algebras are intensively studied by 
several authors, \cite{Jing, LiLuFang2, LiZhaoChen, Lin1, Lin2, TaghaviRohiDarvish, Zhang, YuZhang, ZhaoLi1} . 
A basic question in this line is to investigate whether each nonlinear $\eta$-$\ast$-Jordan-type derivation 
on an operator algebra $\mathcal{A}$ with $\ast$ is an additive $\ast$-derivation. 
In view of the current work and existing results in this direction, we propose several open questions.

\begin{question}\label{xxsec4.1}
Let $\mathcal{H}$ be an infinite dimensional complex Hilbert space and $\mathcal{A}$ be a 
standard operator algebra on $\mathcal{H}$ containing the identity operator $I$. 
Let $\eta$ be a non-zero scalar. 
Suppose that $\mathcal{A}$ is closed under the adjoint operation.  
A mapping $\delta: \mathcal{A} \longrightarrow  \mathcal{B(H)}$ satisfies the following condition:
$$
\delta(p_n(A_1, A_2,\cdots, A_n))=\sum_{k=1}^n
p_n(A_1,\cdots, A_{k-1}, \delta(A_k), A_{k+1},\cdots, A_n)
$$
for all $A_1, A_2,\cdots, A_n \in \mathcal{A}$. Is $\delta$ an additive $\ast$-derivation ? Does 
the relation $\delta(\eta A)=\eta \delta(A)$ hold for any $A\in \mathcal{A}$ ?
\end{question}

\begin{question}\label{xxsec4.2}
Let $\mathcal{B(H)}$ be the algebra of all bounded linear operators on a complex Hilbert space 
$\mathcal{H}$ and $\mathcal{A}\subseteq \mathcal{B(H)}$ be a factor von Neumann algebra. Suppose that $\eta$ is a non-zero scaler. 
Let $\delta: \mathcal{A} \longrightarrow  \mathcal{B(H)}$ be a mapping such that
$$
\delta(p_n(A_1, A_2,\cdots, A_n))=\sum_{k=1}^n
p_n(A_1,\cdots, A_{k-1}, \delta(A_k), A_{k+1},\cdots, A_n)
$$
for all $A_1, A_2,\cdots, A_n \in \mathcal{A}$. Is $\delta$ an additive $\ast$-derivation ? Do we have
the relation $\delta(\eta A)=\eta \delta(A)$ for any $A\in \mathcal{A}$ ?
\end{question}

\begin{question}\label{xxsec4.3}
Let $\mathcal{B(H)}$ be the algebra of all bounded linear operators on a complex Hilbert space 
$\mathcal{H}$ and $\mathcal{A}\subseteq \mathcal{B(H)}$ be a von Neumann algebra without central 
summands of type $I_1$. Let $\eta$ be a non-zereo scalar. A mapping $\delta: \mathcal{A}\longrightarrow \mathcal{B(H)}$ 
satisfies the following conditions:
$$
\delta(p_n(A_1, A_2,\cdots, A_n))=\sum_{k=1}^n
p_n(A_1,\cdots, A_{k-1}, \delta(A_k), A_{k+1},\cdots, A_n)
$$
for all $A_1, A_2,\cdots, A_n \in \mathcal{A}$. Is $\delta$ an additive $\ast$-derivation ? Can we get 
the relation $\delta(\eta A)=\eta \delta(A)$ for any $A\in \mathcal{A}$ ?
\end{question}



\bigskip

\end{document}